\newcommand{\legendre}[2]{\genfrac{(}{)}{}{}{#1}{#2}}
\newtheorem{thm}{Theorem}
\newtheorem{lem}{Lemma}
\newtheorem{qu}{Question}
\theoremstyle{definition}
\theoremstyle{remark}
\author{Jing-Jing Huang}
\address{
Jing-Jing Huang: Department of Mathematics and Statistics, University of Nevada, Reno,
1664 N. Virginia St., Reno, NV 89557}
\email{jingjingh@unr.edu}
\dedicatory{}
\thanks{Research is supported by the UNR VPRI startup grant 1201-121-2479}
\begin{document}

\title
[Diophantine approximation on the parabola]
{Diophantine approximation on the parabola with non-monotonic approximation functions}

\begin{abstract}
We show that the parabola is of strong Khintchine type for convergence, which is the first result of its kind for curves. Moreover, Jarn\'{i}k type theorems are established in both the simultaneous and the dual settings, without monotonicity on the approximation function. To achieve the above, we prove a new counting result for the number of rational points with fixed denominators lying close to the parabola, which uses Burgess's bound on short character sums. 
\end{abstract}
\maketitle

\section{Introduction} \label{s1}

Let $k\in\mathbb{N}$, $\psi:\mathbb{N}\rightarrow[0,\infty)$ and
\begin{equation*}
\mathscr{S}_k(\psi):=\{\mathbf{x}\in\mathbb{R}^k:\exists^\infty{q}\in\mathbb{N}\textrm{ such that } \max_{1\le i\le k}\|{q}\cdot{x_i}\|<\psi(q)\},
\end{equation*}
where $\|\cdot\|$ denotes the distance to the nearest integer. Then it is an immediate consequence of the Borel-Cantelli lemma that $\mathscr{S}_k(\psi)$ has zero Lebesgue measure, when
\begin{equation}\label{e4}
\sum_{q=1}^\infty \psi^k(q)<\infty.
\end{equation}
This, together with a complementary divergence result under the additional assumption that $\psi$ is monotonically decreasing,  is a classical theorem of Khinchine. 

Modern development of the theory of metric diophantine approximation reveals that it is much more difficult to extend Khintchine's theorem to the context of a submanifold $\mathcal{M}$ of $\mathbb{R}^k$ of dimension less than $k$, since they have zero $k$-dimensional Lebesgue measure. More precisely, we say $\mathcal{M}$ is of \emph{Khintchine type for convergence} if for every monotonically decreasing function $\psi$ satisfying \eqref{e4}, the set $\mathscr{S}_k(\psi)\cap\mathcal{M}$ has measure zero with respect to the induced Lebesgue measure on $\mathcal{M}$. Following the terminology first introduced in \cite{Si}, we say $\mathcal{M}$ is of \emph{strong Khintchine type for convergence}, if the monotonicity condition on $\psi$ can be dropped in the above definition. 

There has been some substantial progress regarding Khintchine type manifolds (in both the divergence and the convergence cases), see \cite{bere, BDV, BeD, VV} and the references therein. However, somewhat surprisingly, we only know very little about strong Khintchine type manifolds for convergence. It is worth noting that Duffin and Schaeffer observed that, without monotonicity on $\psi$, the divergence of \eqref{e4} is not sufficient to guarantee that $\mathscr{S}_k(\psi)$  has full measure. Of course, one may assume the stronger condition $\sum_{q=1}^\infty \left(\frac{\phi(q)\psi(q)}q\right)^k=\infty$ and ask whether $\mathscr{S}_k(\psi)\cap \mathcal{M}$ has full induced measure on $\mathcal{M}$. This is analogous to the original Duffin-Schaeffer conjecture, therefore can be called the Duffin-Schaeffer conjecture for manifolds. To the best of our knowledge, there is no progress on this problem in the literature. 

It is shown in \cite{BVVZ, DRV, Si} that some classes of manifolds with various curvature and/or rank conditions are of strong Khintchine type for convergence. However, these conditions completely rule out curves. Indeed, it is remarked in \cite{Si} that 

\begin{center}{``\emph{It remains an open question whether nondegenerate planar curves (e.g. the standard parabola $\{(x, x^2):x\in\mathbb{R}\}$ are of strong Khinchin type for convergence. "}}
\end{center}

In this memoir, we answer the above question affirmatively for the standard parabola $\mathcal{P}:=\{(x, x^2):x\in [0,1]\}$, which has been studied as a pivotal case of non-degenerate planar curves. For instance, it has been shown to be extremal by Kubilyus \cite{K} and of Khintchine type for convergence by Bernik \cite{Ber}.
\begin{thm}\label{t1}
The parabola $\mathcal{P}$ is of strong Khintchine type for convergence.
\end{thm}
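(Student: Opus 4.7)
The plan is to apply the convergence Borel--Cantelli lemma to the sets
\[
E_q:=\{x\in[0,1]:\|qx\|<\psi(q)\text{ and }\|qx^2\|<\psi(q)\}.
\]
Since $\sum_q\psi(q)^2<\infty$ lets one assume $\psi(q)$ as small as desired after discarding finitely many $q$, and since $x\mapsto(x,x^2)$ is bi-Lipschitz from $[0,1]$ onto $\mathcal{P}$, it suffices to prove $\sum_q|E_q|<\infty$.

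To parametrize $E_q$: if $x\in E_q$, there exist $p_1,p_2\in\mathbb{Z}$ with $|x-p_1/q|<\psi(q)/q$ and $|x^2-p_2/q|<\psi(q)/q$; expanding $x^2=(p_1/q+(x-p_1/q))^2$ yields $|p_1^2-qp_2|\le Cq\psi(q)$ for an absolute $C$. For $\psi(q)$ sufficiently small this determines $p_2$ uniquely as the integer nearest $p_1^2/q$, and each admissible $p_1$ contributes to $E_q$ an arc of length $\ll\psi(q)/q$, whence
\[
|E_q|\ll\frac{\psi(q)}{q}\cdot N(q,Cq\psi(q)),\quad N(q,K):=\#\bigl\{p_1\in\{0,\dots,q\}:\|p_1^2/q\|\le K/q\bigr\}.
\]
The problem thus reduces to bounding the short-interval quadratic-residue count $N(q,K)$.

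For prime $q$ with Legendre symbol $\chi$, $\#\{p_1\bmod q:p_1^2\equiv a\}=1+\chi(a)$ for $a\ne 0$, so
\[
N(q,K)=2K+O(1)+O\Bigl(\bigl|\sum_{0<a\le K}\chi(a)\bigr|\Bigr),
\]
and Burgess's bound with $r=2$ gives $N(q,K)\ll K+K^{1/2}q^{3/16+\varepsilon}$. I would then split $\sum_q|E_q|$ into two regimes. If $q\psi(q)\ge 1$, then $|E_q|\ll\psi(q)^2+q^{-5/16+\varepsilon}\psi(q)^{3/2}$; the first piece sums by hypothesis, while H\"older's inequality with exponents $4$ and $4/3$ controls the second, since $\sum_q q^{-5/4+4\varepsilon}<\infty$. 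If $q\psi(q)<1$, only $p_1^2\equiv 0\pmod q$ survives, forcing $p_1$ to be a multiple of $\prod_{p^e\|q}p^{\lceil e/2\rceil}\ge\sqrt q$; thus $N(q,Cq\psi(q))\ll\sqrt q$ and $|E_q|\ll q^{-3/2}$, again summable.

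The main obstacle is establishing the bound on $N(q,K)$ \emph{uniformly in $q$}, not only for primes. For composite moduli one must handle imprimitive quadratic characters, small prime divisors of $q$, and coprimality restrictions, where the Burgess argument requires genuine refinement to retain a power saving in $q^{\varepsilon}$; this is precisely the new counting result for rational points close to $\mathcal{P}$ announced in the abstract. A secondary subtlety is that the Burgess error dominates the main term throughout the range $1\le K\le q^{3/8+\varepsilon}$, so summability against an arbitrary square-summable $\psi$ genuinely needs the H\"older step above rather than a pointwise bound $|E_q|\ll\psi(q)^2$.
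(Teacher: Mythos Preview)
Your overall plan coincides with the paper's: reduce via Borel--Cantelli to the count $A(q,\delta)=N(q,\delta q)$, estimate this via quadratic Gauss sums and Burgess, and sum using H\"older. Your direct H\"older step with exponents $(4,4/3)$ on the Burgess error is a clean alternative to the paper's device of replacing $\psi$ by $\hat\psi(q)=\max(\psi(q),q^{-5/8+\eta})$ so as to absorb that term into the main one; both work.

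The genuine gap is in what you expect the counting bound to look like for composite $q$. The uniform estimate $N(q,K)\ll K+K^{1/2}q^{3/16+\varepsilon}$ is \emph{false}: if $q=m^2$ then every multiple of $m$ satisfies $p_1^2\equiv 0\pmod q$, so $N(q,K)\ge\sqrt q$ for all $K\ge 0$, which already exceeds $K+K^{1/2}q^{3/16+\varepsilon}$ once $K=O(1)$. The paper's Theorem~\ref{t2} accordingly carries a third term $r^{1+\varepsilon}$, where $r$ is the largest integer with $r^2\mid q$; it arises from those divisors $d\mid q$ for which $q/d$ is a perfect square, since then the Jacobi symbol $\legendre{\,\cdot\,}{q/d}$ is principal and Burgess yields no cancellation. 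Your second-regime argument (splitting at $q\psi(q)<1$ and bounding $N$ by $\sqrt q$) is exactly this phenomenon in the extreme case $K<1$, but in your first regime, with $K$ bounded and $q$ having a large square factor, the $r$-contribution still dominates and must be summed separately. The paper does this by Cauchy--Schwarz against $\sum_q r^{2+2\varepsilon}q^{-2}=\sum_r r^{-2+2\varepsilon}\sum_t|\mu(t)|t^{-2}<\infty$; this is the step missing from your outline.
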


We actually prove a more general Jarn\'{i}k type theorem without assuming monotonicity on $\psi$. Let $\mathcal{H}^s$ be the standard Hausdorff $s$-measure.

\begin{thm}\label{t3}
Let $s\in(\frac{11}{13}, 1]$. If the series $\sum_{q=1}^\infty \psi(q)^{s+1}q^{1-s}<\infty$, then $\mathcal{H}^s(\mathscr{S}_2(\psi)\cap\mathcal{P})=0$.
\end{thm}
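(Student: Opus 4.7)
The plan is to apply a Hausdorff-Cantelli type argument. Identifying $\mathcal{P}$ with $[0,1]$ via $(x,x^2)\mapsto x$, define
$$A_q := \{x \in [0,1] : \|qx\| < \psi(q) \text{ and } \|qx^2\| < \psi(q)\},$$
so that $\mathscr{S}_2(\psi)\cap \mathcal{P}$ corresponds to $\limsup_q A_q$ under this identification. It then suffices to establish
$$\sum_{q=1}^\infty \mathcal{H}_\infty^s(A_q) < \infty,$$
since the resulting cover exhibits $\mathcal{H}^s(\limsup A_q)=0$.

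Each $x\in A_q$ lies within $\psi(q)/q$ of some rational $p_1/q$ with $p_1\in\{0,1,\dots,q\}$, and a Taylor expansion of $x^2$ around $p_1/q$ turns the second condition $\|qx^2\|<\psi(q)$ into the congruence $\|p_1^2/q\|\ll \psi(q)$. Consequently, $A_q$ is covered by $N_q$ intervals of length $\sim \psi(q)/q$, where
$$N_q := \#\bigl\{p_1\in\{0,1,\dots,q-1\} : \|p_1^2/q\|\ll \psi(q)\bigr\},$$
so that $\mathcal{H}_\infty^s(A_q)\ll N_q \bigl(\psi(q)/q\bigr)^s$. The task reduces to controlling $\sum_q N_q (\psi(q)/q)^s$.

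The quantity $N_q$ counts integers $p_1$ whose square lies in a prescribed interval of length $\ll q\psi(q)$ modulo $q$. For prime $q$, I would detect squareness by the Legendre symbol $\legendre{\cdot}{q}$ and invoke Burgess's bound on short character sums to obtain
$$N_q \ll q\psi(q) + \bigl(q\psi(q)\bigr)^{1-1/r} q^{(r+1)/(4r^2)+\varepsilon}$$
for any integer $r\geq 2$. For general $q$, the same argument must be replayed on each prime-power factor of $q$ via a Chinese remainder decomposition, with an additional divisor-type loss $q^{\varepsilon}$. This uniform counting estimate, valid for arbitrary $q$ with the correct dependence on $\psi(q)$, is the technical heart of the proof and the main obstacle.

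To close the sum, take $r=2$ in the Burgess bound and split according to the size of $\psi(q)$. When $\psi(q)\geq q^{-5/8}$, the second term in the counting bound is dominated by the first, so $N_q\ll q\psi(q)$ and the contribution is $\ll \psi(q)^{s+1}q^{1-s}$, summable by hypothesis. When $\psi(q)< q^{-5/8}$, the Burgess error dominates and yields $N_q\ll q^{3/8+\varepsilon}$, contributing $\ll q^{3/8-s}\psi(q)^{s}\ll q^{3/8-13s/8+\varepsilon}$; this is summable precisely when $s>11/13$. The threshold $11/13$ in the theorem therefore emerges as the balance point of these two regimes under the Burgess exponent $3/16$ at $r=2$, and any improvement in the character-sum bound would lower the threshold accordingly.
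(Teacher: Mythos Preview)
Your overall strategy---Hausdorff--Cantelli plus a counting estimate for $N_q$ driven by Burgess---is the same as the paper's, but the counting bound you claim for general $q$ is false. If $r$ denotes the largest integer with $r^2\mid q$, then there are exactly $r$ residues $a\bmod q$ with $q\mid a^2$, so $N_q\ge r$ regardless of $\psi$. For $q=m^2$ this gives $N_q\ge q^{1/2}$, contradicting your asserted bound $N_q\ll q^{3/8+\varepsilon}$ in the regime $\psi(q)<q^{-5/8}$. The proposed CRT reduction does not circumvent this: for prime powers $p^k$ with $k\ge2$ there is no Legendre-symbol detector for squares, and for composite $q$ the Jacobi symbol fails to detect quadratic residues, so the ``replay on each prime-power factor with a $q^\varepsilon$ loss'' does not yield the stated estimate.

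The paper's Theorem~\ref{t2} records this obstruction as an extra term, proving
\[
A(q,\delta)\ll \delta q + r^{1+\varepsilon} + \delta^{1/2}q^{11/16+\varepsilon},
\]
by expanding via the Fej\'er kernel into Gauss sums $G(j,q)$ and observing that when $q_1=q/(j,q)$ is a perfect square the associated Jacobi-symbol character is principal, so Burgess gives no cancellation. The resulting series $\sum_q r^{1+\varepsilon}(\psi(q)/q)^s$ must then be controlled separately; the paper does this with H\"older's inequality and the decomposition $q=r^2t$ ($t$ squarefree), which converges for $s>2/3$. Thus the $r$-term is not the bottleneck---the threshold $11/13$ still comes from the Burgess term, exactly as you diagnosed---but it is a genuine piece of the argument that your sketch omits.
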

Some comments regarding the range of $s$ in the above theorem and the possibility of improving it can be found in \S\ref{s4}.

One may as well study the companion problem of dual approximation on planar curves. With the monotonicity condition on $\psi$, the general Hausdorff theory has been developed first for the parabola $\mathcal{P}$ \cite{hus} and then for all non-degenerate planar curves \cite{hua2}. In the special case when only Lebesgue measure is considered (sometimes called Groshev type theorem), it is already known that monotonicity is not needed for non-degenerate curves in $\mathbb{R}^n$ \cite{bere2, BD}. Without much more work, we are also able to remove the monotonicity condition for the Hausdorff theory of dual approximation on the parabola. Let
\begin{equation*}
\mathscr{A}_k(\psi):=\{\mathbf{x}\in\mathbb{R}^k:\exists^\infty \mathbf{q}\in\mathbb{Z}^k\textrm{ such that } \|\mathbf{q}\cdot\mathbf{x}\|<\psi(|\mathbf{q}|_\infty)\},
\end{equation*}
where $|\mathbf{q}|_\infty:=\max\{|q_1|, |q_2|, \ldots, |q_k|\}$.

\begin{thm}\label{t4}
Let $s\in(0,1]$. If the series $\sum_{q=1}^\infty \psi(q)^{s}q^{2-s}<\infty$, then $\mathcal{H}^s(\mathscr{A}_2(\psi)\cap\mathcal{P})=0$.
\end{thm}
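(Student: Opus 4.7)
The plan is to apply the Hausdorff--Cantelli lemma to the limsup representation of $\mathscr{A}_2(\psi)\cap\mathcal{P}$. Since the parametrization $x\mapsto(x,x^2)$ is bi-Lipschitz on $[0,1]$, the Hausdorff $s$-measure on $\mathcal{P}$ is comparable to that on $[0,1]$, and it suffices to exhibit, for each $\mathbf{q}=(q_1,q_2)\in\mathbb{Z}^2\setminus\{\mathbf{0}\}$ with $Q:=|\mathbf{q}|_\infty$, a ball cover of
\[
B_\mathbf{q}:=\{x\in[0,1]\colon\|q_1 x+q_2 x^2\|<\psi(Q)\}
\]
whose total $s$-content, summed over $\mathbf{q}$, is finite.

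Writing $f_\mathbf{q}(x)=q_1 x+q_2 x^2$, the set $B_\mathbf{q}$ decomposes into at most $O(Q)$ arcs, one for each integer $m\in f_\mathbf{q}([0,1])$; the arc around the solution $x_m$ of $f_\mathbf{q}(x)=m$ has length $\asymp\psi(Q)/|f_\mathbf{q}'(x_m)|$ provided $|f_\mathbf{q}'(x_m)|\gtrsim\sqrt{|q_2|\psi(Q)}$. I would first handle these \emph{regular} arcs by approximating the $m$-sum by an integral via the spacing $\Delta x_m\asymp 1/|f_\mathbf{q}'(x_m)|$:
\[
\sum_m\left(\frac{\psi(Q)}{|f_\mathbf{q}'(x_m)|}\right)^s\ll\psi(Q)^s\int_0^1|q_1+2q_2 x|^{1-s}\,dx\ll\psi(Q)^s Q^{1-s},
\]
the last step by a direct computation using $|f_\mathbf{q}'|\le 2Q$, $s\le 1$, and the observation that when the critical point $\alpha=-q_1/(2q_2)$ of $f_\mathbf{q}$ lies in $[0,1]$ one has $|q_2|\asymp Q$. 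Summed over the $O(Q)$ pairs $\mathbf{q}$ with $|\mathbf{q}|_\infty=Q$, this contributes $O(\psi(Q)^s Q^{2-s})$, which is summable in $Q$ by hypothesis.

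The delicate step is the contribution near $\alpha$, which arises only for the $O(Q)$ pairs $\mathbf{q}$ having $\alpha\in[0,1]$. A single ball of radius $\sqrt{\psi(Q)/Q}$ around $\alpha$ covers this neighborhood and contributes $(\psi(Q)/Q)^{s/2}$, but summing naively gives $O(\psi(Q)^{s/2}Q^{1-s/2})$, which is \emph{not} controlled by the hypothesis. The refinement I would use is that the actual size of the critical arcs depends on
\[
\delta_\mathbf{q}:=\|f_\mathbf{q}(\alpha)\|=\|q_1^2/(4q_2)\|,
\]
being a single arc of length $\asymp\sqrt{\psi(Q)/Q}$ when $\delta_\mathbf{q}<\psi(Q)$, and splitting into two short arcs each of length $\asymp\psi(Q)/\sqrt{Q\delta_\mathbf{q}}$ otherwise. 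Performing a dyadic decomposition in $\delta_\mathbf{q}$ together with a counting estimate
\[
\#\{\mathbf{q}:|\mathbf{q}|_\infty=Q,\ \alpha\in[0,1],\ \delta_\mathbf{q}\asymp 2^{-j}\}\ll Q^{1+\epsilon}\cdot 2^{-j},\qquad j\ge 0,
\]
then yields a total critical contribution at level $Q$ of $O(Q^{1+\epsilon-s/2}\psi(Q)^s)$, which for $s\le 1$ is dominated by $\psi(Q)^s Q^{2-s}$ and hence sums by hypothesis.

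The main obstacle I anticipate is establishing the counting estimate above. It splits according to whether $|q_2|=Q$ or $|q_1|=Q$ (both forcing $|q_2|\asymp Q$ when $\alpha\in[0,1]$), reducing to (i) counting $q_1$ with $q_1^2\equiv r\pmod{4|q_2|}$ for $r$ in an interval of length $\asymp|q_2|\cdot 2^{-j}$, controlled by the standard divisor-function bound on the number of square roots modulo $n$; and (ii) counting $q_2\in[Q/2,Q]$ with $Q^2/(4q_2)$ close to an integer, controlled by the near-linearity of $q_2\mapsto Q^2/(4q_2)$ on that range. No deep character-sum input such as the Burgess bound (as needed for Theorem~\ref{t1}) is required here, consistent with the paper's remark that Theorem~\ref{t4} follows from the earlier work without much more effort.
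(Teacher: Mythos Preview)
Your overall strategy---Hausdorff--Cantelli on the arcs of $B_\mathbf{q}$, with a ``regular'' part estimated by an integral and a ``critical'' part near $\alpha$ governed by $\delta_\mathbf{q}=\|q_1^2/(4q_2)\|$---is sound and is essentially the framework of \cite{hua2} that the paper invokes. The gap is in the justification of your counting estimate in case~(i). The number of solutions of $q_1^2\equiv r\pmod{4|q_2|}$ is \emph{not} $\ll Q^\epsilon$ in general: the divisor-type bound on square roots holds only for $r$ coprime to the modulus, and the correct uniform bound is $\ll Q^\epsilon\sqrt{\gcd(r,4|q_2|)}$. In particular, for $r=0$ the count in a full period equals the largest integer $r_Q$ with $r_Q^2\mid 4|q_2|$, which can be as large as $\sqrt{Q}$. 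These $q_1$ have $\delta_\mathbf{q}=0$ and are not captured by any dyadic level $\delta_\mathbf{q}\asymp 2^{-j}$; they contribute $r_Q\,(\psi(Q)/Q)^{s/2}$ at level $Q$, which is \emph{not} dominated by your claimed $Q^{1+\epsilon-s/2}\psi(Q)^s$ when $Q$ has a large square factor and $\psi(Q)$ is small.

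This missing piece is precisely the $r^{1+\varepsilon}$ term of the paper's Theorem~\ref{t5}, the replacement the paper uses for the averaged lemmas of \cite{hua2}. Once isolated, it is summable over $Q$ by H\"older together with the sparseness of integers with large square part, exactly as for the middle series in \S\ref{s3}:
\[
\sum_Q r_Q^{1+\varepsilon}\Bigl(\frac{\psi(Q)}{Q}\Bigr)^{s/2}
\le\Bigl(\sum_Q r_Q^{2+2\varepsilon}Q^{-2}\Bigr)^{1/2}\Bigl(\sum_Q\psi(Q)^s Q^{2-s}\Bigr)^{1/2}<\infty.
\]
Your instinct that the Burgess exponent itself is not the crux for Theorem~\ref{t4} turns out to be correct---the term $\delta^{1/2}q^{11/16+\varepsilon}$ of Theorem~\ref{t5} is comfortably absorbed in this dual setting---but the structural separation of the square-factor contribution $r^{1+\varepsilon}$, which your divisor-bound heuristic overlooks, is the essential ingredient you are missing.
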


It is well known that metric diophantine approximation on a manifold is closely related to counting rational points near the manifold. This is especially true in the case of simultaneous approximation, see \cite{bere, BDV, BZ, hua1, hua3, hua4, VV} for some recent advances.  However, it is usually much more difficult to obtain satisfactory estimates of the counting function without averaging over the denominator $q$, especially in the case of curves.  It is this additional averaging over $q$ that mandates the monotonicity of $\psi$ be assumed in order to prove Khintchine type theorems on the manifold.   We succeed in obtaining such a counting result for the parabola without averaging over $q$, which is nevertheless strong enough to prove the above theorems. 

Let $\delta\in(0,\frac12)$ and $$
A(q,\delta):=\sum_{\substack{a\le q\\\|a^2/q\|<\delta}}1.
$$
The function $A(q,\delta)$ naturally counts the number of rational points of fixed denominator $q$ that are lying close to $\mathcal{P}$. We obtain the following estimate of $A(q,\delta)$.

\begin{thm}\label{t2} Let $r$ be the largest integer such that $r^2|q$. Then for any $\varepsilon>0$
$$A(q,\delta)\ll \delta q+r^{1+\varepsilon}+\delta^{\frac12}q^{\frac{11}{16}+\varepsilon}$$
where the implied constant only depends on $\varepsilon$.
\end{thm}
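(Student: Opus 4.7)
My plan is to reformulate $A(q,\delta)$ as a sum of solution counts to quadratic congruences, extract the diagonal and small-denominator contributions by hand, and estimate the generic short character sums that remain via Burgess's inequality. I rewrite
$$
A(q,\delta) = \sum_{|n|<\delta q} r_q(n),\qquad r_q(n):=\#\{a\pmod q:\ a^2\equiv n\pmod q\},
$$
and split according to $d:=\gcd(n,q)$. When $d=q$ (i.e.\ $n=0$), the congruence $q\mid a^2$ forces $v_p(a)\ge\lceil v_p(q)/2\rceil$ at every prime, which writing $q=r^2s$ with $s$ squarefree is the condition $rs\mid a$; so $r_q(0)=q/(rs)=r$, accounting for the $r$ contribution exactly.

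For each proper divisor $d$ of $q$, I write $n=dn'$ with $\gcd(n',q/d)=1$ and factor out of $a$ the common square divisors forced by $d\mid a^2$. Setting $d^*=\prod_p p^{\lceil v_p(d)/2\rceil}$ and $a=d^* b$, the original congruence reduces (when solvable) to $eb^2\equiv n'\pmod{q'}$ with $e=(d^*)^2/d$, $q'\mid q$ and $\gcd(n',q')=1$, and $r_q(n)$ becomes a bounded multiple of $r_{q'}(m)$ for an $m$ coprime to $q'$ in an interval of length $O(\delta q/d)$. At this stage I apply the standard identity
$$
\#\{b\pmod{q'}:\ b^2\equiv m\pmod{q'}\}=\sum_{\chi^2=\chi_0}\chi(m)\qquad\bigl(\gcd(m,q')=1\bigr),
$$
where the sum ranges over $O(q^\varepsilon)$ real Dirichlet characters mod $q'$. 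The principal character produces the main term of order $\delta q$.

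Every nontrivial $\chi$ is induced from a primitive character of some conductor $f\mid q'\mid q$, and Burgess's inequality with parameter $r=2$ gives
$$
\Bigl|\sum_{M<m\le M+N}\chi(m)\Bigr|\ll N^{1/2}f^{3/16+\varepsilon}\ll N^{1/2}q^{3/16+\varepsilon};
$$
with $N\le\delta q$ this supplies the $\delta^{1/2}q^{11/16+\varepsilon}$ term after summing over the $O(q^\varepsilon)$ characters and the $O(\tau(q))=O(q^\varepsilon)$ values of $d$. The coprimality constraint $\gcd(m,q')=1$ can be removed by Möbius inversion at the cost of only an extra $q^\varepsilon$ factor, since each resulting inner short character sum still enjoys the same Burgess estimate.

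The step I expect to be most delicate is the stratification by $d$: one must keep track of the multiplicative factor $d^*$ and the solvability condition $(e,q')\mid n'$, and verify that the aggregate contribution from $n\ne 0$ with $\gcd(n,q)>1$ is absorbed by $O(r^{1+\varepsilon})$. The essential observation driving this bound is that $d^*$ can exceed $1$ only when $d$ has a nontrivial square part, so $d^*\mid r$, and the sparser the available $n'$ (because the modulus $q'$ shrinks and the coprimality constraint tightens) the more room the factor $d^*$ has without exceeding the claimed bound.
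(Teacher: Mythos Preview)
Your approach is genuinely different from the paper's. The paper expands the indicator of $\|a^2/q\|<\delta$ via the Fej\'er kernel, obtaining quadratic Gauss sums $G(j,q)=\sum_a e(ja^2/q)$; these are evaluated explicitly as Jacobi symbols in $j$, and the resulting short character sum over $j\le J\asymp 1/\delta$ is bounded by Burgess. You instead write $A(q,\delta)=\sum_{|n|<\delta q}r_q(n)$ and expand $r_q(n)$ (after reducing to coprime modulus) as a sum of real characters in $n$, applying Burgess to a sum of length $\asymp\delta q$. The two routes are Fourier-dual --- one exploits cancellation in the frequency variable, the other in the physical variable --- and both land on the exponent $\tfrac12+\tfrac3{16}=\tfrac{11}{16}$. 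In the paper the term $r^{1+\varepsilon}$ arises from those $q_1\mid q$ that are perfect squares (where the Jacobi symbol degenerates to the principal character), while in your scheme $r$ already appears as $r_q(0)$.

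The gap is exactly the part you flag as delicate. Your reduction for fixed $d=\gcd(n,q)$ is essentially right (though the multiplicity factor and the quantity dividing $r$ is $d/d^*=\prod_p p^{\lfloor v_p(d)/2\rfloor}$, not $d^*$), and the non-principal characters behave as you claim. But the \emph{principal} character at level $d$ contributes $(d/d^*)\cdot O(\delta q/d)=O(\delta q/d^*)$, and summing this over $d\mid q$ yields $\delta q\cdot\prod_{p\mid q}(1+O(1/p))$, a genuine $2^{\omega(q)}$-type factor. This is not $O(\delta q)$, and it is certainly not $O(r^{1+\varepsilon})$: for $q$ squarefree one has $r=1$ while the sum can be arbitrarily large. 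The paper's Fej\'er-kernel route sidesteps the issue because the main term $\delta q$ comes from the single frequency $j=0$ and everything else is an oscillatory Gauss sum. To repair your argument you would have to show that the principal-character pieces across all $d$ really do combine to $O(\delta q)$; this is plausible (morally it is $2\delta q$ times the average of $r_q$, which is $1$), but it needs a careful inclusion--exclusion that you have not supplied, and your heuristic ``$d^*\mid r$ so the $d>1$ part is $O(r^{1+\varepsilon})$'' does not survive the squarefree example.
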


Loosely speaking, Theorem \ref{t2} allows us to deduce that $A(q,\delta)$ is small for the vast majority of $q$ while it may occasionally become large when $q$ has a square factor almost as large as $q$. However, the latter case only happens very rarely. 

We remark in passing that the method within this paper can be adapted in a straightforward manner to treat the case of a quadratic polynomial with rational coefficients. The details are left to the interested reader. 

We will prove Theorem \ref{t2} first in \S\ref{s2}. Then using Theorem \ref{t2}, we will prove Theorem \ref{t3} in \S\ref{s3} and Theorem \ref{t4} in \S\ref{s4}. To prove Theorem \ref{t4} we will need to re-run the argument of \cite{hua2}, therefore we will only highlight the places where changes are needed and refer the reader to \cite{hua2} for the full structure of the proof.  In \S\ref{s5} we will discuss the limitation of our method and some future questions. 

\section{The proof of Theorem \ref{t2}}\label{s2}
Let $J=\left\lfloor \frac{1}{2\delta}\right\rfloor$. Recall the Fej\'{e}r kernel
$$
\mathcal{F}_J(x):=\sum_{j=-J}^J\frac{J-|j|}{J^2}e(jx),
$$
which satisfies $\mathcal{F}_J(x)\ge \frac4{\pi^2}$ when $\|x\|\le \delta$ and $\mathcal{F}_J(x)\ge 0$ for all $x$.
Then   we have
$$
A(q,\delta)\ll\sum_{a\le q} \mathcal{F}_J(a^2/q)\ll \delta q+D(q)
$$
where

$$D(q)\ll \sum_{j=1}^J\frac{J-j}{J^2}\sum_{a=1}^qe\left(\frac{ja^2}q\right).$$

Let 
$$G(j,q):=\sum_{a=1}^qe\left(\frac{ja^2}q\right)$$ and 
\[
\varepsilon_m=
\left\{
\begin{array}{ll}
1,&\text{when }m\equiv1\pmod{4}\\
i,&\text{when }m\equiv 3\pmod{4}.
\end{array}
\right.
\]

To evaluate these quadratic Gauss sums $G(j,q)$ precisely, we state the following lemma \cite[\S3.5]{IK}.
\begin{lem}\label{l1}
Suppose $(j,q)=1$. Then
\[
G(j,q)=
\left\{
\begin{array}{ll}
0,&\text{when } q\equiv2\pmod{4},\\
\varepsilon_q\legendre{j}{q}\sqrt{q},&\text{when } q \text{ is odd},\\
(1+i)\varepsilon_j^{-1}\legendre{q}{j}\sqrt{q},&\text{when } j \text{ is odd and }4|q.\\
\end{array}
\right.
\]
Here $\legendre{*}{*}$ is the Jacobi symbol.
\end{lem}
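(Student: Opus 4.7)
The plan is to prove Lemma \ref{l1} along the classical path followed in \cite[\S 3.5]{IK}: first establish a CRT-based multiplicativity of $G(j,q)$ that reduces the evaluation to prime power moduli, then evaluate $G(j, p^k)$ at each prime power by completing the square, and finally reassemble the pieces using quadratic reciprocity.

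\emph{Multiplicativity.} For $q = q_1 q_2$ with $(q_1, q_2) = 1$, the map $(a_1, a_2) \mapsto a_1 q_2 + a_2 q_1 \pmod{q_1 q_2}$ is a bijection from $\Z/q_1\Z \times \Z/q_2\Z$ onto $\Z/q_1 q_2\Z$. Expanding the square, the cross term $2 a_1 a_2 q_1 q_2$ is a multiple of the modulus, so $ja^2/(q_1 q_2) \equiv jq_2 a_1^2/q_1 + jq_1 a_2^2/q_2 \pmod{1}$, which yields
$$G(j, q_1 q_2) = G(j q_2, q_1)\, G(j q_1, q_2).$$
This reduces the problem to prime power moduli.

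\emph{Odd prime powers.} For $q = p^k$ with $p$ odd and $(j,p)=1$, the substitution $a \mapsto at$ across units $t \in (\Z/p^k\Z)^*$ combined with Legendre symbol orthogonality gives $G(j, p^k) = \legendre{j}{p^k} G(1, p^k)$. For $k \geq 2$ the substitution $a = b + p^{k-1} c$ with $b \pmod{p^{k-1}}$ and $c \pmod{p}$ telescopes into $G(1, p^k) = p \cdot G(1, p^{k-2})$, reducing to the base cases $k = 1, 2$. The case $k=2$ evaluates directly to $p$; the case $k=1$ is Gauss's classical sum $G(1,p) = \varepsilon_p \sqrt p$, whose sign I would take as a black box, since its determination is a well-known classical result with several standard proofs (Dirichlet's $L$-function argument, Schur's DFT eigenvalue method, or contour integration). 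Combining these yields $G(j, p^k) = \varepsilon_{p^k} \legendre{j}{p^k} \sqrt{p^k}$.

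\emph{Powers of 2 and reassembly.} The case $q = 2$ with $j$ odd yields $G(j, 2) = 1 + (-1)^j = 0$, which by the multiplicativity above forces $G(j,q) = 0$ whenever $q \equiv 2 \pmod 4$. For $q = 2^k$ with $k \geq 2$ and $j$ odd, a completing-the-square substitution $a = b + 2^{k-2} c$ reduces to the base cases $k = 2, 3$, which I would compute by direct expansion to arrive at the stated form $(1+i)\varepsilon_j^{-1} \legendre{2^k}{j} \sqrt{2^k}$. Finally, for a general $q = 2^k m$ with $m$ odd and $4 \mid q$, combining the two prime-power evaluations through the multiplicativity formula and applying quadratic reciprocity together with its supplementary laws converts the resulting Jacobi symbols into the closed form stated in the lemma. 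The main obstacle is the careful tracking of $\varepsilon$-factors and signs during this reassembly, which reduces to a finite case check modulo $4$; the only deep input is Gauss's classical sign determination.
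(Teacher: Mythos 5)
The paper offers no proof of this lemma at all: it is quoted verbatim from the cited source \cite[\S 3.5]{IK}. Your outline reconstructs precisely the standard argument behind that reference (CRT multiplicativity $G(j,q_1q_2)=G(jq_2,q_1)G(jq_1,q_2)$, reduction to prime powers, Gauss's sign determination, reassembly via quadratic reciprocity) and is correct as a sketch, so it matches the intended provenance of the statement.
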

Note that $\legendre{*}{q}$ is a Dirichlet character modulo $q$ and $\legendre{q}{*}$ is a Dirichlet character of conductor $q'|4q$.

Clearly, these Gauss sums $G(j,q)$ are of size at most $\sqrt{2q}$. But this turns out to be insufficient for our purpose; we need to exploit as well the cancellation arising from the sign changes of those Jacobi symbols as $j$ varies. To that end, we state Burgess's landmark theorem on short character sums \cite{Bu}.
\begin{lem}\label{l2}
Let $\chi$ be a non-principal character modulo $q$. Then for any $\varepsilon>0$, we have
$$
\sum_{M<n\le M+N}\chi(n)\ll N^{\frac12}q^{\frac{3}{16}+\varepsilon}
$$
with the implied constant depending only on $\varepsilon$.
\end{lem}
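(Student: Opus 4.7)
The plan is to follow Burgess's amplification method, which reduces a bound on the short character sum to a moment estimate for completed character sums, the latter being controlled via the Weil bound on character sums over $\mathbb F_q$. I focus on the essential case when $q$ is prime; composite moduli are handled by factoring $\chi$ through its conductor and combining the prime estimate with the P\'olya--Vinogradov inequality on long intervals.

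Write $S := \sum_{M < n \le M+N}\chi(n)$. The first ingredient is the \emph{shift trick}: a telescoping argument shows $\bigl|S - \sum_{n}\chi(n+l)\bigr| \ll |l|$ for any integer $l$. Averaging this identity over shifts $l = ab$ with $1 \le a \le A$, $1 \le b \le B$ (for parameters $A, B$ to be optimised later) yields
$$ABS = \sum_{a \le A}\sum_{b \le B}\sum_{M<n\le M+N}\chi(n+ab) + O(A^2B^2).$$
When $(a,q) = 1$ the multiplicativity of $\chi$ gives $\chi(n+ab) = \chi(a)\chi(\bar a n + b)$, where $\bar a$ denotes the inverse of $a$ modulo $q$. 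Setting $T(t) := \sum_{b=1}^B \chi(t+b)$, the triple sum becomes $\sum_n \sum_a \chi(a)T(\bar a n)$. Applying H\"older's inequality to the sum over $n$ with exponent $2r$, and noting that as $(a,n)$ ranges over $[1,A]\times(M,M+N]$ each residue $t = \bar a n \pmod q$ is hit roughly $AN/q$ times on average, reduces the problem to bounding the $2r$-th moment
$$M_{2r} := \sum_{t \pmod q}|T(t)|^{2r}.$$

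The critical bound is $M_{2r} \ll_r qB^r + B^{2r}q^{1/2}$. Expanding the $2r$-fold product rewrites $M_{2r}$ as a sum over tuples $\mathbf b = (b_1, \ldots, b_{2r}) \in [1,B]^{2r}$ of complete character sums $\sum_{t \pmod q}\chi(R_{\mathbf b}(t))$, where $R_{\mathbf b}(t) = \prod_{i\le r}(t+b_i)\prod_{j>r}(t+b_j)^{-1}$. By Weil's theorem (the Riemann hypothesis for curves over $\mathbb F_q$), each such complete sum is $O(r q^{1/2})$ unless $R_{\mathbf b}$ is a perfect $k$-th power in $\mathbb F_q(t)$ (where $k$ is the order of $\chi$); this degenerate case occurs precisely for the diagonal tuples in which the $b_i$'s pair up, giving a total contribution of $O(qB^r)$. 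Substituting this moment bound back into the H\"older inequality and optimising the free parameters ($A \asymp N^{1/2}q^{-1/16}$ and $B \asymp q^{1/4}$ for $r = 2$, with the resulting logarithms absorbed into the $q^\varepsilon$ factor) yields the claimed bound $N^{1/2}q^{3/16+\varepsilon}$. The main technical obstacle is the Weil bound input: one must classify when the rational function $R_{\mathbf b}(t)$ fails to be a $k$-th power in $\mathbb F_q(t)$ and then invoke the deep bound for character sums along the corresponding smooth projective curve, which is where the Riemann hypothesis for curves over $\mathbb F_q$ enters.
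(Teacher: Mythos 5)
The paper does not actually prove this lemma: it is quoted verbatim as Burgess's theorem and supported only by the citation \cite{Bu}, so there is no internal proof to compare yours against. On its own merits, your sketch of the prime-modulus case is a faithful outline of Burgess's amplification method --- the shift trick with $l=ab$, the H\"older reduction to the $2r$-th moment of $T(t)=\sum_{b\le B}\chi(t+b)$, the moment bound $\ll_r qB^r+B^{2r}q^{1/2}$ via the Weil bound, and the choice $r=2$ producing the exponent $(r+1)/(4r^2)=3/16$ --- and your parameter choices are self-consistent (when $N<q^{3/8}$ the trivial bound $|S|\le N$ already gives the claim, so one may assume $AB\ll N$).

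The genuine gap is the one-line dismissal of composite moduli. Factoring $\chi$ through its conductor only reduces the problem to a \emph{primitive} character of composite modulus (at the cost of a harmless divisor sum), and the P\'olya--Vinogradov inequality cannot close the remaining distance: it gives $q^{1/2}\log q$, which exceeds the target $N^{1/2}q^{3/16}$ throughout the entire range $q^{3/8}\ll N\ll q^{5/8}$ where Burgess's bound has any content. The hard part of \cite{Bu} is precisely the moment estimate for primitive characters of arbitrary composite modulus: $\chi$ factors into characters modulo prime powers, the Weil input must be supplemented by character-sum bounds modulo prime powers, and the count of degenerate tuples $\mathbf{b}$ becomes substantially more delicate --- this is exactly why the exponent $(r+1)/(4r^2)$ is known for all $r$ only when $q$ is cube-free, while $r=2,3$ work for every $q$. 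This generality is not optional here: Lemma \ref{l2} is applied in Lemma \ref{l3} to the Jacobi-symbol characters $\legendre{*}{q_1}$ and $\chi_{{}_{\scriptstyle0}}\legendre{q_1}{*}$, $\chi_{{}_{\scriptstyle1}}\legendre{q_1}{*}$, whose moduli are arbitrary composite integers (and the characters may well be imprimitive). As written, your argument establishes the lemma only for prime $q$, which would not suffice for the application in this paper.
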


We start by observing that
\begin{align*}
D(q)\ll &\sum_{j=1}^J\frac{J-j}{J^2}(j,q)G(j/(j,q),q/(j,q))\\
\ll& \sum_{d|q}d\sum_{\substack{j\le J\\(j,q)=d}}\frac{J-j}{J^2}G(j/d,q/d).
\end{align*}

For fixed $d|q$, let $q_1=q/d$. Then
\begin{lem}\label{l3}
$$
\sum_{\substack{j_1\le N\\(j_1,q_1)=1}}G(j_1,q_1)\ll
\left\{
\begin{array}{ll}
N\sqrt{q_1}&\text{if } q_1 \text{ is a square},\\
N^{\frac12}q_1^{\frac{11}{16}+\varepsilon}&\text{otherwise}.
\end{array}
\right.
$$
\end{lem}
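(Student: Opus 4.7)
The plan is to use Lemma \ref{l1} to express $G(j_1, q_1)$ as (a scalar multiple of) a Jacobi symbol, and then bound the resulting sum either trivially when that Jacobi symbol coincides with the principal character, or via Burgess's estimate (Lemma \ref{l2}) when it does not.

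First I would dispatch the trivial case $q_1 \equiv 2 \pmod 4$, in which Lemma \ref{l1} makes every $G(j_1, q_1)$ vanish. For odd $q_1$, Lemma \ref{l1} rewrites the sum as
\[
\varepsilon_{q_1}\sqrt{q_1}\sum_{j_1\le N}\legendre{j_1}{q_1},
\]
the coprimality condition being automatic since the Jacobi symbol vanishes on non-units. When $q_1$ is an odd perfect square, $\legendre{\cdot}{q_1}=1$ on units and the total is trivially $\ll N\sqrt{q_1}$. When $q_1$ is odd and not a square, $\legendre{\cdot}{q_1}$ is a non-principal character mod $q_1$, so Lemma \ref{l2} yields $\sum_{j_1\le N}\legendre{j_1}{q_1}\ll N^{1/2}q_1^{3/16+\varepsilon}$, and hence the desired bound $\ll N^{1/2}q_1^{11/16+\varepsilon}$.

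The case $4\mid q_1$ is similar but requires a little more bookkeeping. Here $(j_1,q_1)=1$ forces $j_1$ odd, and Lemma \ref{l1} writes the sum as $(1+i)\sqrt{q_1}\sum \varepsilon_{j_1}^{-1}\legendre{q_1}{j_1}$. Decomposing the mod-$4$ factor as $\varepsilon_{j_1}^{-1}=\frac{1-i}{2}+\frac{1+i}{2}\chi_4(j_1)$, where $\chi_4$ is the non-trivial character mod $4$, splits the sum into two pieces of the shape $\sum_{j_1\le N}\chi(j_1)\legendre{q_1}{j_1}$ with $\chi$ a character mod $4$. When $q_1$ is a square, $\legendre{q_1}{j_1}=1$ on units and the trivial bound gives $\ll N\sqrt{q_1}$. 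Otherwise, $\chi(\cdot)\legendre{q_1}{\cdot}$ is a non-principal Dirichlet character of modulus dividing $4q_1$, so Burgess yields $\ll N^{1/2}(4q_1)^{3/16+\varepsilon}$ and the factor of $4$ is absorbed into $\varepsilon$.

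The one technical point that must be verified is that $\legendre{q_1}{\cdot}$ is non-principal precisely when $q_1$ is not a perfect square. Writing $q_1=2^c m$ with $m$ odd and $c\ge 2$, multiplicativity gives $\legendre{q_1}{j_1}=\legendre{2}{j_1}^c\legendre{m}{j_1}$; quadratic reciprocity identifies the second factor (up to a mod-$4$ twist) with $\legendre{j_1}{m}$, which is a character mod $m$ that is principal iff $m$ is a square. A quick case analysis in the parity of $c$ then shows that the product is principal iff $q_1$ itself is a square, after which the Burgess step goes through. This character-theoretic verification is where the main (albeit mild) obstacle lies; the rest of the argument is essentially a bookkeeping exercise.
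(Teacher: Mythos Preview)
Your argument is correct and follows the same approach as the paper: invoke Lemma~\ref{l1} to reduce $S$ to a character sum, bound trivially when $q_1$ is a square, and apply Burgess (Lemma~\ref{l2}) to the resulting non-principal character otherwise. The paper is terser about the non-principality verification---it simply asserts it---whereas you spell out the case analysis; for completeness, note that the $\chi_4$-twisted piece in the $4\mid q_1$ case requires not just $\legendre{q_1}{\cdot}$ but $\chi_4\legendre{q_1}{\cdot}=\legendre{-q_1}{\cdot}$ to be non-principal, which is immediate since $-q_1<0$ is never a square.
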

\begin{proof}

Denote by $S$ the sum on the left side. If $q_1$ is a square, then by Lemma \ref{l1}
$$
S\ll\sum_{j_1\le N}\sqrt{q_1}=N\sqrt{q_1}.
$$

Next we treat the case when $q_1$ is not a square. Let $\chi_0$ be the principal character modulo 4 and $\chi_1$ be the quadratic character modulo 4. It is readily verified that 

$$
\varepsilon_m^{-1}=\frac{1-i}2\chi_{{}_{\scriptstyle0}}(m)+\frac{1+i}2\chi_{{}_{\scriptstyle1}}(m).
$$

If  $q_1$ is odd, let $\chi=\legendre{*}{q_1}$; if $q_1$ is even, let $\chi=\chi_{{}_{\scriptstyle0}}\legendre{q_1}{*}$ or $\chi=\chi_{{}_{\scriptstyle1}}\legendre{q_1}{*}$. Note that in any case, $\chi$ is always a non-principal character of modulus at most $4q_1$. Therefore, we may apply Burgess's bound (Lemma \ref{l2}) to the character sum $\sum_{j_1\le N}\chi(j_1)$ after another application of Lemma \ref{l1}, and obtain 
$$
S\ll N^{\frac12}q_1^{\frac3{16}+\varepsilon}\sqrt{q_1}=N^{\frac12}q_1^{\frac{11}{16}+\varepsilon}.
$$
\end{proof}

Hence by Lemma \ref{l3} and partial summation, we get
$$
\sum_{\substack{j_1\le J/d\\(j_1,q_1)=1}}\frac{J-j_1d}{J^2}G(j_1,q_1)\ll
\left\{
\begin{array}{ll}
\frac{\sqrt{q_1}}{d}&\text{if } q_1 \text{ is a square},\\
\frac{q_1^{\frac{11}{16}+\varepsilon}}{\sqrt{dJ}}&\text{otherwise}.
\end{array}
\right.
$$

 Then

\begin{align*}
D(q)\ll &\sum_{\substack{q_1|q\\q_1=\square}}\sqrt{q_1}+\sum_{\substack{q_1|q\\q_1\neq\square}}\sqrt{qJ^{-1}}q_1^{\frac{3}{16}+\varepsilon}\\
\ll&r^{1+\varepsilon}+\delta^{\frac12}q^{\frac{11}{16}+2\varepsilon}
\end{align*}
where, in the last line, we use the well known bounds $\sigma(r)=\sum_{r_1|r}r_1\ll r^{1+\varepsilon}$ and $d(q)=\sum_{q_1|q}1\ll q^{\varepsilon}$ on the arithmetic functions $\sigma(r)$ and $d(q)$ \cite[Chap. I.5]{Te}.

\section{The proof of theorem \ref{t3}}\label{s3}
By considering the auxiliary function $\hat\psi(q):=\max(\psi(q), q^{-5/8+\eta})$ for some $\eta\in\left(0,\frac58-\frac{2-s}{s+1}\right)$ if necessary, it is easily seen that there is no loss of generality in assuming that
\begin{equation}\label{e2}
\psi(q)\ge q^{-5/8+\eta}\quad \text{for all }q.
\end{equation}

We note that if $x\in[0,1]$ satisfies
$$
\left\{
\begin{array}{l}
|x-a/q|<\psi(q)/q\\
|x^2-b/q|<\psi(q)/q
\end{array}
\right.
$$ for some $1\le a,b\le q$, then
$$
\left|\frac{a^2}{q^2}-\frac{b}q\right|\le\left|\frac{a^2}{q^2}-x^2\right|+\left|x^2-\frac{b}q\right|<3\frac{\psi(q)}{q}.
$$
Hence, in order to show $\mathcal{H}^s(\mathscr{S}_2(\psi)\cap\mathcal{P})=0$, it suffices to show that the set
$$
\limsup_{q\to\infty}\bigcup_{\substack{a\le q\\ \|a^2/q\|<3\psi(q)}}\left(\frac{a}q-\frac{\psi(q)}q, \frac{a}q+\frac{\psi(q)}q\right)
$$
has zero Hausdorff $s$-measure.
This can be accomplished by the Hausdorff-Cantelli lemma, if we can show that
$$
\sum_{q=1}^\infty A(q,3\psi(q))\left(\frac{\psi(q)}q\right)^s<\infty.
$$

And indeed, by Theorem \ref{t2}, it follows that

\begin{align}
&\nonumber\sum_{q=1}^\infty A(q,3\psi(q))\left(\frac{\psi(q)}q\right)^s\\ \ll& \sum_{q=1}^\infty\psi(q)^{s+1}q^{1-s}+\sum_{q=1}^\infty r^{1+\varepsilon}\left(\frac{\psi(q)}q\right)^s+\sum_{q=1}^\infty\psi(q)^{\frac12}q^{\frac{11}{16}+\varepsilon}\left(\frac{\psi(q)}q\right)^s.\label{e1}\end{align}

The first series converges by assumption.  By H\"{o}lder's inequality, the second series above is bounded by
$$
\left(\sum_q (r^{1+\varepsilon}q^{-\frac{2s}{s+1}})^{s+1}\right)^\frac1{s+1}\left(\sum_q \psi(q)^{s+1}q^{1-s}\right)^\frac{s}{s+1}.
$$

On noting that
\begin{align*}
\sum_q (r^{1+\varepsilon}q^{-\frac{2s}{s+1}})^{s+1}&= \sum_{r=1}^\infty\sum_{t=1}^\infty |\mu(t)|r^{(1+\varepsilon)(s+1)}(r^2t)^{-2s}\\
&\le\sum_r r^{(1+\varepsilon)(s+1)-4s}\sum_t t^{-2s}
\end{align*}
and that $2s>22/13$ and $(1+\varepsilon)(s+1)-4s<-3/2$ for sufficiently small $\varepsilon$, we conclude the convergence of the second series in \eqref{e1}.

Taking $\varepsilon=\eta/2$, we deduce from \eqref{e2} that
$$
\psi(q)^\frac12q^{\frac{11}{16}+\varepsilon}\le \psi(q)q.
$$
So the third series in \eqref{e1} also converges. Thus we conclude that the series $\sum A(q,3\psi(q))\left(\frac{\psi(q)}q\right)^s$ converges and therefore complete the proof.

\section{Proof of Theorem \ref{t4}}\label{s4}
We will follow closely the proof in \cite{hua2}. The only places where averaging over $q$ is needed in \cite{hua2} are when \cite[Lemma 4]{hua2} and \cite[Lemma 5]{hua2} are applied. Here we replace them with the following variants of Theorem \ref{t2} in this paper. 
\begin{thm}\label{t5}
Let $r$ be the largest integer such that $r^2|q$, $\lambda\in\mathbb{Q}$, $I=[c, d]$ be an interval and $\alpha\in(0,1/2]$. Then for any $\varepsilon>0$ we have
$$\sum_{\substack{a/q\in I\\\|\lambda a^2/q\|<\delta}}1\ll \delta q+r^{1+\varepsilon}+\delta^{\frac12}q^{\frac{11}{16}+\varepsilon}$$
and
$$\sum_{\substack{a/q\in I\\\|\lambda a^2/q\|\ge\delta}}\left\|\lambda \frac{a^2}q\right\|^{-\alpha}\ll q+\delta^{-\alpha}r^{1+\varepsilon}.$$
\end{thm}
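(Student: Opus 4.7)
My plan is to adapt the proof of Theorem~\ref{t2} from \S\ref{s2} to accommodate three new features: the interval constraint $a/q\in I$, the rational multiplier $\lambda$, and, in the second estimate, the negative-moment weight. Write $\lambda=u/v$ in lowest terms, and set $Q:=vq$, so that $\lambda a^2/q = ua^2/Q$. For the first bound, I would again invoke the Fej\'er kernel $\mathcal{F}_J$ with $J=\lfloor 1/(2\delta)\rfloor$ to detect the condition $\|\lambda a^2/q\|<\delta$, which yields a main term $\ll \delta\,|I|\, q\ll \delta q$ together with an exponential-sum error of the form
$$
\sum_{j=1}^{J}\frac{J-j}{J^2}\sum_{\substack{a\in\mathbb{Z}\\ a/q\in I}} e\!\left(\frac{jua^2}{Q}\right).
$$
The inner sum is an incomplete quadratic Gauss sum modulo $Q$, which can be completed (via Fourier expansion of the indicator of $qI$) into a linear combination of complete Gauss sums $G(ju+l,Q)$, $|l|\le Q$, with weights bounded by $\min(|I|q,1/|l|)$. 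After separating the gcd and extracting the primitive part via Lemma~\ref{l1}, what remains is a sum of Jacobi symbols in the variable $j$ (with an $l$-dependent shift), and this is controlled by Burgess's bound (Lemma~\ref{l2}) exactly as in Lemma~\ref{l3}. Since $v$ is fixed, every factor of $v$ is absorbed into the implied constant, and the largest integer whose square divides $Q$ differs from $r$ by at most a bounded factor, so the final bound retains the shape $\delta q + r^{1+\varepsilon} + \delta^{1/2} q^{11/16+\varepsilon}$.

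The second bound follows by dyadic decomposition. Partitioning the range $\|\lambda a^2/q\|\in[\delta,1/2]$ into dyadic intervals $[2^{-k-1},2^{-k}]$ for $0\le k\le \log_2(1/\delta)$, one bounds
$$
\sum_{\substack{a/q\in I\\ \|\lambda a^2/q\|\ge \delta}}\left\|\lambda\frac{a^2}{q}\right\|^{-\alpha}\ll \sum_{k}2^{(k+1)\alpha}\, N_k,
$$
where $N_k$ counts those $a$ with $a/q\in I$ and $\|\lambda a^2/q\|<2^{-k}$. The first part of the theorem yields $N_k\ll 2^{-k} q + r^{1+\varepsilon} + 2^{-k/2} q^{11/16+\varepsilon}$. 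Since $\alpha\le 1/2<1$, the first contribution sums geometrically to $\ll q$; the second contribution is $\ll r^{1+\varepsilon}\cdot 2^{\alpha\log_2(1/\delta)}=\delta^{-\alpha}r^{1+\varepsilon}$; and the third contribution is at most a logarithmic factor times $q^{11/16+\varepsilon}$, which is comfortably dominated by $q+\delta^{-\alpha}r^{1+\varepsilon}$ in every regime of $\delta$.

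The main technical obstacle is executing the completion step for the incomplete Gauss sum over an arbitrary interval while still recovering the precise $r^{1+\varepsilon}+\delta^{1/2}q^{11/16+\varepsilon}$ shape. In particular, one must carefully handle the non-primitive contributions that arise when $\gcd(ju+l,Q)>1$ and verify that the logarithmic losses from the Fourier coefficients of the interval indicator are absorbed into the existing $r^{1+\varepsilon}$ and $\delta^{1/2}q^{11/16+\varepsilon}$ terms (the latter with an $\varepsilon$ of Burgess type to spare). Everything else is routine bookkeeping once this is in place.
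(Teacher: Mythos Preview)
Your high-level plan coincides with the paper's: adapt the Fej\'er/Burgess argument of Theorem~\ref{t2} for the first inequality, then sum over dyadic ranges for the second. The dyadic step is carried out correctly; the borderline case $\alpha=1/2$ costs only a factor $\log(1/\delta)$ on the $q^{11/16+\varepsilon}$ term, which is absorbed by $q+\delta^{-1/2}r^{1+\varepsilon}$ in every regime.

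There is, however, a genuine slip in your treatment of the interval constraint. Fourier-expanding the indicator of $qI$ and interchanging sums produces
\[
\sum_{l}c_l\sum_{a\bmod Q}e\!\left(\frac{jua^2+la}{Q}\right),
\]
i.e.\ quadratic Gauss sums with a \emph{linear} twist, not $G(ju+l,Q)$ as you wrote. For $Q$ odd and $\gcd(ju,Q)=1$, completing the square gives $e(-\overline{4ju}\,l^2/Q)\,G(ju,Q)$, so after Lemma~\ref{l1} the $j$-sum becomes
\[
\sum_{j}\frac{J-j}{J^2}\,\legendre{ju}{Q}\,e\!\left(-\frac{\overline{4ju}\,l^2}{Q}\right),
\]
a character sum twisted by an additive character of $\bar{j}$. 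Lemma~\ref{l2} does not apply to this, and bounding the $l\neq0$ terms without cancellation in $j$ yields an error of order $q^{1/2+\varepsilon}$, which exceeds the target when $\delta$ and $r$ are both small.

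The remedy is simpler than completion: since $\mathcal{F}_J\ge0$, one may drop the constraint $a/q\in I$ and extend the $a$-sum to a full residue system modulo $Q'$, where $\lambda a^2/q=u'a^2/Q'$ with $\gcd(u',Q')=1$. The proof of Theorem~\ref{t2} then runs verbatim with $q$ replaced by $Q'$ and $j$ replaced by $ju'$; because $Q'\asymp_\lambda q$ and the largest square dividing $Q'$ is $\asymp_\lambda r^2$, the bound $\delta q+r^{1+\varepsilon}+\delta^{1/2}q^{11/16+\varepsilon}$ follows with an implied constant depending on $\lambda$ and $|I|$. This is presumably what the paper means by ``easily adapted''.
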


The proof of Theorem \ref{t2} can be easily adapted to prove the first estimate, while the second follows from the first by summing over  dyadic ranges.

Also note that the dual curve of $\mathcal{P}$ is $\{(x,-x^2/4): x\in[0,2]\}$. After making these changes, the bulk of the proof in \cite{hua2}  remains unchanged. The details are left to the reader.

\section{Further comments and questions}\label{s5}

If the conjectural bound 
\begin{equation}\label{e3}
\sum_{M<n\le M+N}\chi(n)\ll N^{\frac12}q^{\varepsilon}
\end{equation}
holds true for non-principal characters $\chi$ modulo $q$, one may verify that the third series in \eqref{e1} would be convergent for $s\in(1/2,1]$. But the range of $s$ in Theorem \ref{t3} can only be extended to $s\in(2/3,1]$, since when $s\le 2/3$ the second series in \eqref{e1} diverges.  This gives a measurement how far one might be able to improve Theorem \ref{t3}.
However \eqref{e3} is too far out of reach as it is closely related to the Generalized Riemann Hypothesis for Dirichlet $L$-functions, which demonstrates the depth of the problem under consideration in this paper.  Burgess's paper \cite{Bu} was published more than 54 years ago, but it still remains essentially the state of the art even today (at least for general modulus).  It is unlikely that the range of $s$ in Theorem \ref{t3} could be improved without beating Burgess's bound. 

Even for the case of parabola dealt with in this paper, we have already seen that some of the finest results in multiplicative number theory have to be used. It is conceivable that the general case of planar curves is significantly more difficult. It is not even clear, to the best of our knowledge,  what the general picture should look like. One may attempt to answer the following questions first.

\begin{qu}
For $\alpha\in\mathbb{R}\backslash\mathbb{Q}$, is the irrational parabola $y=\alpha x^2$ of strong Khintchine type for convergence?
\end{qu}

\begin{qu}
Is the unit circle $x^2+y^2=1$ of strong Khintchine type for convergence?
\end{qu}

\begin{qu}
Does there exist a non-degenerate planar curve which is not of strong Khintchine type for convergence?
\end{qu} 

It is hoped that this paper will attract more attention to these problems.

\proof[Acknowledgments]
The author would like to thank David Simmons for providing comments and Huixi Li for reading the manuscript.

\end{document}